\documentclass[a4paper, 12pt]{amsart}
\usepackage[left=3cm,right=3cm]{geometry}

\usepackage{qmod}


\usepackage[english]{babel}

\usepackage{amsthm, amssymb, amsfonts, amsmath, mathrsfs}
\usepackage[all]{xy}

\usepackage{caption}
\usepackage{url}
\usepackage{tikz}
\usepackage{hyperref}

\usepackage{xcolor}

\usepackage{array}
\newcommand{\PreserveBackslash}[1]{\let\temp=\\#1\let\\=\temp}
\newcolumntype{C}[1]{>{\PreserveBackslash\centering}p{#1}}
\newcolumntype{R}[1]{>{\PreserveBackslash\raggedleft}p{#1}}
\newcolumntype{L}[1]{>{\PreserveBackslash\raggedright}p{#1}}

\newcounter{stepcounter}
\theoremstyle{plain}

\newtheorem{thm}{Theorem}[section]

\newtheorem{lem}[thm]{Lemma}

\theoremstyle{definition}

\newtheorem{defn}[thm]{Definition}

\newtheorem{remark}[thm]{Remark}



\newcommand\bit{\begin{itemize}}
\newcommand\eit{\end{itemize}}
\newcommand\bet{\begin{enumerate}}
\newcommand\eet{\end{enumerate}}
\newcommand\ed{\end{document}}

\DeclareFontFamily{U}{mathx}{\hyphenchar\font45}
\DeclareFontShape{U}{mathx}{m}{n}{
      <5> <6> <7> <8> <9> <10>
      <10.95> <12> <14.4> <17.28> <20.74> <24.88>
      mathx10
      }{}
\DeclareSymbolFont{mathx}{U}{mathx}{m}{n}
\DeclareFontSubstitution{U}{mathx}{m}{n}
\DeclareMathAccent{\widecheck}{0}{mathx}{"71}
\DeclareMathAccent{\wideparen}{0}{mathx}{"75}



\newcommand{\e}{\varepsilon}
\newcommand\Om{\Omega}
\newcommand\del{\partial}
\newcommand\adel{\ol{\partial}}
\newcommand\DEL{\Delta}

\newcommand\bC{{\mathbb C}}
\newcommand\bN{{\mathbb N}}



\newcommand\F{{\mathcal F}}

\newcommand\N{{\mathcal N}}




\newcommand\co{\mathrm{co}}

\newcommand\exd{\mathrm{d}}

\newcommand\unit{\mathrm{U}}
\newcommand\counit{\mathrm{C}}

\newcommand\id{\mathrm{id}}
\newcommand\proj{\mathrm{proj}}



\newcommand\coby{\, \square_{H}}
\newcommand\oby{\otimes}

\newcommand\sseq{\subseteq}
\newcommand\tl{\triangleleft}

\def\qbinom#1#2{\ensuremath{\left[\kern-.3em\left[\genfrac{}{}{0pt}{}{#1}{#2}\right]\kern-.3em\right]_q}}

\newcommand\ol{\overline}



\newcommand\mto{\mapsto}


\usepackage{tikz}
\usetikzlibrary{decorations.pathreplacing}

\usepackage[english]{babel}

\newcommand{\fl}{\mathfrak{l}}

\newcommand{\fsl}{\mathfrak{sl}}
\newcommand{\fsu}{\mathfrak{su}}
\newcommand{\eps}{\varepsilon}

\newcommand{\cP}{\mathcal{P}}
\newcommand{\cO}{\mathcal{O}}
\newcommand{\cE}{\mathcal{E}}
\newcommand{\cF}{\mathcal{F}}

\usepackage{mathtools}

\DeclareMathOperator{\Span}{span}

\author[A. O. Krutov]{Andrey O. Krutov}
\address{Mathematical Institute of Charles University, Sokolovsk\'a 83, Prague, Czech Republic}
\email{andrey.krutov@matfyz.cuni.cz}

\author[R. \'O Buachalla]{R\'eamonn \'O Buachalla}
\address{Mathematical Institute of Charles University, Sokolovsk\'a 83, Prague, Czech Republic}
\email{obuachalla@karlin.mff.cuni.cz}

\title[Curvature of relative line modules over quantum projective spaces]{Curvature of positive relative line modules over the quantum projective spaces}

\thanks{A.K. was supported by the GA\v{C}R Grant EXPRO 19-28628X.
  R.\'OB was supported by the Charles University PRIMUS grant \emph{Spectral Noncommutative Geometry of Quantum Flag Manifolds} PRIMUS/21/SCI/026.
  This article is based upon work from COST Action CaLISTA CA21109 supported by COST (European
  Cooperation in Science and Technology, \url{www.cost.eu}).
}
\keywords{quantum groups, noncommutative geometry, quantum flag manifolds, complex geometry}

\date{}

\subjclass[2020]{
  46L87, 
  81R60, 
  81R50, 
  17B37, 
  16T05}  


\begin{document}

\maketitle

\begin{abstract}
  We show that the curvature of a positive relative line module over quantum projective space is given by $q$-integer deformation of its classical curvature. This generalises a result of Majid for the Podle\'s sphere.
\end{abstract}


\section{Introduction}
For any  complex manifold, the curvature of the Chern connection is additive over tensor products of
holomorphic vector bundles. In particular, any tensor power of a~positive line bundle is again positive.
In the noncommutative setting tensoring two holomorphic vector bundles is more problematic. First of all, in
order to define a~tensor product, at least one of the bundles noncommutative holomorphic structures  $(\cF,\adel_{\cF})$ needs to be a~bimodule.
Moreover,  $\adel_{\cF}$  needs to be a bimodule connection (in the sense of
\cite{DVMadoreMouradBimodule,MadoreBimodule,DVMichor}). Even in this case, curvature does not behave
additively.
In particular, for a~bimodule holomorphic line bundle~$\cE$,
we cannot directly conclude positivity of~$\cE^{\otimes_B k}$ from positivity of~$\cE$,
making the general approach of~\cite{Fano} all the more valuable.
For the case of~$\cO_q(S^2)$ the Podle\'s sphere~\cite{Maj}, and~$\cO_q(\mathbb{CP}^2)$ the quantum projective
plane~\cite{Antiselfdual}, it is known that the classical line bundle curvatures $q$-deform to quantum integer
curvatures.
In this paper we show that this process generalises to all positive line bundles over the quantum projective spaces.
This suggests that there is some type of $q$-deformed (or braided) additivity underlying these results.
Understanding this process presents itself as an~interesting and important future goal.
Note that throughout this paper~$(k)_q$ denotes  the quantum integer defined in
Appendix~\ref{app:quantumintegers}.

Recall from~\cite{Fano}, that the positive homogeneous line bundles over the irreducible quantum flag manifolds, in
particular, quantum projective space, are indexed by positive integers. In this paper we show that the
curvature of a~positive homogeneous line bundle~$\cE_k$, where $k\in\mathbb{N}$, over a~quantum projective
space~$\cO_q(\mathbb{CP}^n)$ is given by the $q$-integer~$(k)_{q^{-2/(n+1)}}$.

In future work, it is planned to extend this work to include all of the irreducible quantum flag manifolds endowed with their Heckenberger--Kolb calculi.



\section{Quantum Principal Bundles and Connections}

In this section, we recall those definitions and results from the theory of quantum principal bundles
necessary for our explicit curvature calculations below.

For recent advances in the theory of quantum principal bundles, see~\cite{AschieriFioresiLatini2021,Branimir2021CMP,BrzezinskiSzymanski2021,NicholsGrass} and
references therein.

\subsection{Universal Differential Calculi}

A {\em first-order differential calculus} over an~algebra~$B$ is a~pair $(\Om^1,\exd)$, where $\Omega^1$ is
a~$B$-bimodule and $\exd: B \to \Omega^1$ is a~linear map for which the {\em Leibniz rule} holds 
\begin{align*}
  \exd(ab)=a(\exd b)+(\exd a)b,&  & a,b \in B,
\end{align*}
and for which~$\Om^1$ is generated as a~left $B$-module by those elements of the form~$\exd b$, for~$b \in B$.
The {\em universal first-order differential calculus} over~$B$ is the pair 
$(\Om^1_u(B), \exd_u)$, where $\Om^1_u(B)$ is the kernel of the multiplication map $m_B: B \otimes B \to B$ endowed
with the obvious bimodule structure, and $\exd_u$ is the map defined by
\begin{align*}
  \exd_u: B \to \Omega^1_u(B), & & b \mto 1 \oby b - b \oby 1.
\end{align*}
By \cite[Proposition 1.1]{WoroCQPGs}, every first-order differential calculus over $B$ is of the form
$\left(\Omega^1_u(B)/N, \,\proj \circ \exd_u\right)$, where $N$ is a~$B$-sub-bimodule of $\Omega^1_u(B)$, and
we denote by $\proj:\Omega^1_u(B) \to \Omega^1_u(B)/N$ the quotient map. This gives a~bijective correspondence
between calculi and sub-bimodules of $\Omega^1_u(B)$.

For $A$ a~Hopf algebra, and $B$ a~left $A$-comodule algebra, we say that a~first-order differential calculus
$\Omega^1(B)$ over $B$ is   {\em left $A$-covariant} if there exists a (necessarily unique) map
$\DEL_L:\Om^1(B) \to A \oby \Om^1(B)$ satisfying 
\begin{align*}
  \DEL_L(b\exd b') = \DEL_L(b) (\id \oby \exd)\DEL_L(b'),  &  & b,b' \in B.
\end{align*}
For the special case of $A$ considered as a left $A$-comodule algebra over itself, we note that every
covariant first-order differential calculus over $A$ is naturally an object in the category
$\qMod{A}{A}{}{A}$; see \S\ref{app:fun}.

\subsection{Holomorphic structures}

\subsubsection{Connections}
Motivated by the Serre--Swan theorem~\cite{Serre1955,Swan1962}, we think of a finitely generated projective left $B$-module $\F$ as
a~noncommutative generalisation of a~vector bundle.
For $\Omega^\bullet$ a~differential calculus over an~algebra~$B$ and~$\mathcal{F}$ a~finitely generated
projective left $B$-module, a \emph{connection} on~$\F$ is a~$\mathbb{C}$-linear map 
\[
  \nabla:\mathcal{F} \to \Omega^1 \otimes_B \F
\]
satisfying 
\begin{align} \label{eqn:connLR}
  \nabla(bf) = \exd b \otimes f + b \nabla f, & &\text{for all } b \in B, f \in \F.
\end{align}

Any connection can be extended to a map $\nabla: \Omega^\bullet \otimes_B \mathcal{F} \to   \Omega^\bullet \otimes_B \mathcal{F}$ uniquely defined by 
\begin{align*}
  \nabla(\omega \otimes f) =   \exd \omega \otimes f + (-1)^{|\omega|} \, \omega \wedge \nabla f,
\end{align*}
where $f \in \F$, and~$\omega$ is a~homogeneous element of $\Omega^{\bullet}$ of degree~$|\omega|$.
The \emph{curvature} of a~connection is the left $B$-module map
$\nabla^2: \mathcal{F} \to \Omega^2 \otimes_B\mathcal{F}$.
A~connection is said to be {\em flat} if $\nabla^2 = 0$.

\subsubsection{Complex Structures}

In this subsection we recall the definition of a~complex structure for a differential calculus, as introduced
in~\cite{KLvSPodles, BS}, see also~\cite{BeggsMajid:Leabh}. This gives an abstract characterisation of the
properties of the de Rham complex of a~classical complex manifold~\cite{HUY}.

\begin{defn}\label{defnnccs}
  A {\em  complex structure} $\Om^{(\bullet,\bullet)}$, for a~differential $*$-calculus~$(\Om^{\bullet},\exd)$,
  is an $\bN^2_0$-algebra grading $\bigoplus_{(a,b)\in \bN^2_0} \Om^{(a,b)}$
  for $\Om^{\bullet}$ such that, for all $(a,b) \in \bN^2_0$: 
  \[
    \Om^k = \bigoplus_{a+b = k} \Om^{(a,b)},\qquad
    \big(\Om^{(a,b)}\big)^* = \Om^{(b,a)},\qquad
    \exd \Om^{(a,b)} \sseq \Om^{(a+1,b)} \oplus \Om^{(a,b+1)}.
  \]

\end{defn}

An element of~$\Om^{(a,b)}$ is called an~\emph{$(a,b)$-form}.
For $ \proj_{\Om^{(a+1,b)}}$, and~$ \proj_{\Om^{(a,b+1)}}$,
the projections from $\Om^{a+b+1}$ to~$\Om^{(a+1,b)}$, and~$\Om^{(a,b+1)}$ respectively, we write
\begin{align*}
  \del|_{\Om^{(a,b)}} : = \proj_{\Om^{(a+1,b)}} \circ \exd, & & \ol{\del}|_{\Om^{(a,b)}} : = \proj_{\Om^{(a,b+1)}} \circ \exd.
\end{align*}
It follows from Definition~\ref{defnnccs} that for any complex structure, 
\begin{align*}
  \exd = \del + \adel, & &  \adel \circ \del = - \, \del \circ \adel, & & \del^2 = \adel^2 = 0. 
\end{align*}
Thus $\big(\bigoplus_{(a,b)\in \bN^2_0}\Om^{(a,b)}, \del,\ol{\del}\big)$ is a~double complex.
Both~$\del$ and~$\adel$ satisfy the graded Leibniz rule. Moreover,   
\begin{align*} \label{eqn:stardel}
  \del(\omega^*) = \big(\adel \omega\big)^*, & &  \adel(\omega^*) = \big(\del \omega\big)^*, & & \text{ for all  } \omega \in \Om^\bullet. 
\end{align*}
Associated with any complex structure $\Omega^{(\bullet,\bullet)}$ we have a~second complex structure, called
its {\em opposite complex structure}, defined as 
\[
  \ol{\Om}^{(\bullet,\bullet)} := \bigoplus_{(a,b) \in \bN^2_0} \ol{\Om}^{(a,b)},
  \qquad\qquad
  \text{where $\ol{\Om}^{(a,b)}:= \Omega^{^{(b,a)}}$}.  
\]
See~\cite[\textsection 1]{BeggsMajid:Leabh} or~\cite{MMF2} for a~more detailed discussion of complex structures.

\subsubsection{Holomorphic Modules}

In this subsection we present the notion a~holomorphic left $B$-module for an~algebra~$B$.
Such a~module should be thought of as a~noncommutative holomorphic vector bundle,
as has been considered in a number of previous papers, see for example~\cite{BS}, \cite{PolishSch},
\cite{KLvSPodles}, \cite{HolVBs}, and~\cite{Fano}.
Indeed, the definition for holomorphic modules is motivated by the classical Koszul--Malgrange
characterisation of holomorphic bundles~\cite{KoszulMalgrange}.
See~\cite{OSV} for a more detailed discussion.

With respect to a~choice $\Omega^{(\bullet,\bullet)}$ of complex structure on~$\Omega^{\bullet}$,
a~\emph{$(0,1)$-connection on $\mathcal{F}$} is a~connection with respect to the differential calculus~$(\Omega^{(0,\bullet)},\adel)$.

\begin{defn}
  Let $(\Omega^\bullet, \exd)$ be a~differential $*$-calculus over a $*$-algebra $B$,
  equipped with a~complex structure $\Omega^{(\bullet, \bullet)}$.
  A~\emph{holomorphic} left $B$-module is a~pair $(\mathcal{F},\adel_{\mathcal{F}})$,
  where $\mathcal{F}$ is a~finitely generated projective left $B$-module,
  and $\adel_{\mathcal{F}}: \mathcal{F} \to \Omega^{(0,1)} \otimes_B \mathcal{F}$
  is a~flat $(0,1)$-connection.
  We call~$\adel_{\F}$ the \emph{holomorphic structure} of the holomorphic left $B$-module. 
\end{defn}


\subsection{Quantum Principal Bundles and Principal Connections} \label{sec:qpbqhs}

We say that a~right $H$-comodule algebra $(P,\DEL_R)$ is a \emph{Hopf--Galois extension} of $B:=P^{\co(H)}$ if
an isomorphism is given by
\begin{align*}
  \mathrm{can}:= (m_P \otimes \id) \circ( \id \otimes \Delta_R) :P \otimes_B P \to P \otimes H, & & r \otimes s \mapsto rs_{(1)} \otimes s_{(2)},
\end{align*}
where $m_P$ denotes the multiplication in~$P$. It was shown~\cite[Proposition~3.6]{TBGS} that~$P$ is
a~Hopf--Galois extension of $B=P^{\co(H)}$ if and only if an~exact sequence is given by
\begin{align} \label{qpbexactseq}
  0 \longrightarrow P\Om^1_u(B)P {\buildrel \iota \over \longrightarrow} \Om^1_u(P) {\buildrel {\overline{\text{can}}}\over \longrightarrow} P \oby H^+ \longrightarrow 0,
\end{align}
where $\Om^1_u(B)$ is the restriction of $\Om^1_u(P)$ to $B$, we denote by $\iota$ the inclusion, and 
\(
\overline{\mathrm{can}}
\)
is the restriction to  $\Om^1_u(P)$ of the map
\[
(m_P \oby \id) \circ (\id \oby \DEL_R) : P \oby P \to P \oby H.
\]
(Note that the map's domain of definition is $P \otimes P$, rather than $P \otimes_B P$.)
The following definition, due to Brzezi\'nski and Majid~\cite{TBSM1,TBSMRevisited},
presents sufficient criteria for the existence of a~non-universal version of this sequence.
A~non-universal calculus on~$P$ is said to be {\em right $H$-covariant} if the following, necessarily unique, map is well defined
\begin{align*}
  \DEL_R: \Om^1(P) \to \Om^1(P) \otimes H, & & r \exd s \mto r_{(0)} \exd s_{(0)} \oby r_{(1)} s_{(1)}.
\end{align*}

\begin{defn} \label{qpb} Let $H$ be a~Hopf algebra. A {\em quantum principal $H$-bundle} is a~pair
  $(P,\Omega^1(P))$, consisting of a~right $H$-comodule algebra $(P,\Delta_R)$, such that~$P$ is
  a~Hopf--Galois extension of $B = P^{\,\co(H)}$, together with a~choice of right-$H$-covariant calculus
  $\Om^1(P)$, such that for $N \sseq \Om^1_u(P)$ the corresponding sub-bimodule of the universal calculus, we
  have $\overline{\mathrm{can}}(N) = P \oby I$, for some $\mathrm{Ad}$-sub-comodule right ideal
  \[
    I \sseq H^+ :=  \ker(\e: H \to \mathbb{C}),
  \]
  where $\mathrm{Ad} : H \to H \otimes H$ is defined by $\mathrm{Ad}(h) := h_{(2)} \otimes S(h_{(1)}) h_{(3)}$.
\end{defn}

Denoting by~$\Om^1(B)$ the restriction of $\Om^1(P)$ to~$B$, and $\Lambda^1_H := H^+/I$,
the quantum principal bundle definition implies that an exact sequence is given by
\begin{align} \label{Eqn:qpbexactseq}
  0 \longrightarrow P\Om^1(B)P {\buildrel \iota \over \longrightarrow} \, \Om^1(P) {\buildrel {~\overline{\mathrm{can}}~~}\over \longrightarrow} P \oby \Lambda^1_H \longrightarrow 0,
\end{align}
where by abuse of notation, $\overline{\mathrm{can}}$ denotes the map induced on~$\Om^1(P)$ by
identifying~$\Om^1(P)$ as a~quotient of~$\Om_u^1(P)$ (for details see~\cite{PHC}).

A {\em principal connection} for a~quantum principal $H$-bundle $(P,\Omega^1(P))$ is a~right $H$-comodule,
left $P$-module, projection $\Pi:\Om^1(P) \to \Om^1(P)$ satisfying
\[
\ker(\Pi) = P\Om^1(B)P.
\]
The existence of a~principal connection is equivalent to the existence of a~left \linebreak $P$-module, right $H$-comodule,
splitting of the exact sequence given in~\eqref{Eqn:qpbexactseq}.
A~principal connection~$\Pi$ is called {\em strong} if $(\id - \Pi) \big(\exd P\big) \sseq \Om^1(B)P$.

\subsection{Quantum Principal Bundles and Quantum Homogeneous Spaces}

We now restrict to the case of a~homogeneous quantum principal bundle, which is to say, a~quantum principal
bundle whose composite $H$-comodule algebra is a~\emph{quantum  homogeneous space} $B:=A^{\co(H)}$, given by
a~surjective Hopf algebra map $\pi : A \to H$.
For this special case,  it is natural to restrict to calculi on~$A$ which are left $A$-covariant.
Any such calculus~$\Omega^1(A)$ is an~object in $\qMod{A}{A}{}{A}$, and so,
by the fundamental theorem of two-sided Hopf modules (Appendix~\ref{app:fun}), we have the isomorphism
\[
  \unit: \Omega^1(A) \simeq A \otimes  F\!\left(\Omega^1(A)\right)\!.
\]
As a~direct calculation will verify, with respect to the right $H$-coaction 
\begin{align*}
  F(\Omega^1(A)) \to F(\Omega^1(A)) \otimes H, & & [\omega] \mapsto  [\omega_{(0)}] \otimes \pi(S(\omega_{(-1)})) \omega_{(1)},
\end{align*}
the unit $\unit$ of the equivalence is a~right $H$-comodule map.
(Here the right \mbox{$H$-coaction} on $A \otimes F(\Omega^1(A))$ is the usual tensor product coaction.)
Thus a~left $A$-covariant principal connection is equivalent to a~choice of right $H$-comodule decomposition
\[
  F(\Omega^1(A)) \simeq F(A\Omega^1(B)A) \oplus F(A \otimes \Lambda^1_H) \simeq F(A\Omega^1(B)A) \oplus \Lambda^1_H.
\]
As established in~\cite{BWGrass}, for a~homogeneous quantum principal bundle with cosemisimple
composite Hopf algebras, all principal connections are strong.
 
Next, we come connections on any $\cF \in \modz{A}{B}$. Note first that we have a~natural embedding 
\begin{align*}
  j: \Om^1(B) \oby_B \cF \hookrightarrow  \Om^1(B)A \, \square_H \Phi(\cF), & & \omega \otimes f \mapsto \omega f_{(-1)} \otimes [f_{(0)}].
\end{align*}
We claim that a~strong principal connection~$\Pi$ defines a~connection~$\nabla$ on~$\cF$ by
\begin{align*}
  \nabla: \cF \to  \Om^1(B) \oby_B \cF, &   & f \mto  j^{-1} \big((\id - \Pi)(\exd f_{(-1)}) \oby [f_{(0)}]\big).
\end{align*}
Indeed, since~$\exd$ and the projection~$\Pi$ are both right \mbox{$H$-comodule} maps,
their composition $(\id - \Pi) \circ \mathrm{d}$ is a~right $H$-comodule map.
Hence the image of $(\id - \Pi) \circ \mathrm{d}$  is contained in  $j\left(\Om^1(B) \oby_B \cF\right)$,
and~$\nabla$ defines a~connection.
Moreover, if the principal connection~$\Pi$ is a~left $A$-comodule map,
then the connection~$\nabla$ is also a~left $A$-comodule map.

\section{Curvature of Line Bundles over Quantum Projective Spaces}
In this section we compute curvature of positive homogeneous line bundles over quantum projective space using
the framework of quantum principal bundles.
\subsection{Quantum Projective Spaces}
In this subsection we recall the definition of {\em quantum projective space},
which is to say the $A$-series irreducible quantum flag manifold $\cO_q(\mathbb{CP}^n)$.
For details and notation see~\cite{HolVBs,Fano} or~\cite{DOS1}.

Fix~$q\in\mathbb{C}$ such that $q\neq0$ and $q^n\neq1$ for any integer $n\geq1$.
The \emph{quantised universal enveloping algebra~$U_q(\fsl_{n+1})$} is the associative algebra generated by the
elements $E_i$, $F_i$, $K_i$, and $K^{-1}_i$, for $i = 1,\ldots,n$, subject to the relations
\begin{gather*}
  K_iE_jK_i^{-1} = q^{2\delta_{i,j}-\delta_{i,j-1} - \delta_{i,j+1}} E_j,\qquad
  K_iF_jK_i^{-1} = q^{-2\delta_{i,j}+\delta_{i,j-1} - \delta_{i,j+1}} F_j,\qquad
  K_iK_j=1,\\
  K^{-1}_iK_i=K_iK^{-1}_i=1,\qquad
  E_iF_j - F_jE_i = \delta_{i,j}\frac{K_i-K^{-1}_i}{q-q^{-1}},
\end{gather*}
together with the \emph{quantum Serre relations}
\begin{gather*}
  E_iE_j = E_jE_i,\qquad F_iF_j=F_jF_i,\quad |i-j|\geq2,\\
  E_i^2E_{i\pm1} - (q+q^{-1})E_iE_{i\pm1}E_i + E_{i\pm1}E_i^2 = 0,\\
  F_i^2F_{i\pm1} - (q+q^{-1})F_iF_{i\pm1}F_i + F_{i\pm1}F_i^2 = 0.
\end{gather*}
The algebra $U_q(\fsl_{n+1})$ is a Hopf algebra with structure maps given by
\begin{gather*}
  \Delta E_i = E_i\otimes K_i + 1\otimes E_i,\quad
  \Delta F_i = F_i\otimes 1 + K^{-1}_i \otimes F_i,\quad
  \Delta K^{\pm}_i = K^{\pm}_i \otimes K^{\pm}_i,\\
  S(E_i) = - E_iK^{-1}_i,\quad
  S(F_i) = - K_iF_i,\quad
  S(K^{\pm}_i) = K^{\mp}_i,\\
  \eps(K_i) = 0,\quad \eps(E_i)=\eps(F_i)=0.
\end{gather*}
For $q\in\mathbb{R}$, a~Hopf $\ast$-algebra stucture, called the \emph{compact real form} of~$U_q(\fsl_{n+1})$, is defined by
\[
  E_i^\ast := K_iF_i,\qquad
  F_i^\ast := E_iK_i^{-1},\qquad
  K_i^\ast := K_i.
\]
and denoted by~$U_q(\fsu_{n+1})$.

Let $V$ be a~finite-dimensional left $U_q(\fsl_{n+1})$-module, $v\in V$, $f\in V^\ast$. Consider the function
$c^V_{f,v}\colon U_q(\fsl_{n+1})\to \mathbb{C}$ defined by $c^V_{f,v} := f(X v)$. The \emph{coordinate
  space} is the subspace
\[
  C(V) := \Span_{\mathbb{C}}(c^V_{f,v}\mid v\in V, f\in V^\ast) \subseteq U_q(\fsl_{n+1})^\circ,
\]
where $U_q(\fsl_{n+1})^\circ$ denote the dual Hopf algebra of $U_q(\fsl_{n+1})$.
A $U_q(\fsl_{n+1})$-bimodule structure on~$C(V)$ is given by
\[
  (Y c^{V}_{f,v} Z) (X) := f(YXZv) = c^V_{f Z, Yv}(X)
\]
Let $\cP_{+}$ be the set of dominant integral weights of~$\fsl_{n+1}$. For $\lambda\in\cP_+$, denote
by~$V_\lambda$ the irreducible $U_q(\fsl_{n+1})$-module with the highest weight~$\lambda$.
It is easily checked, that the subspace
\[
  O_q(SU_{n+1}) := \bigoplus_{\mu\in\cP_{+}} C(V_\mu)
\]
is a Hopf $\ast$-subalgebra of~$U_q(\fsu_{n+1})^\circ$.

The quantum Levi subalgebra corresponding to the quantum projective space is defined by
\begin{align*}
  U_q(\frak{l}) := \big< K_i, E_j, F_j \,|\, i = 1, \ldots, n+1; j = 2, \dots, n+1 \big> \subseteq U_q(\frak{sl}_{n+1}).
\end{align*}
The Hopf $\ast$-algebra embedding $\iota\colon U_q(\fl)\to U_q(\fsu_{n+1})$ induces a~dual Hopf
algebra map $\iota^\circ\colon U_q(\fsu_{n+1})^\circ \to U_q(\fl)^\circ$.
By construction $\cO_q(SU_{n+1}) \subset U_q(\fsu_{n+1})^\circ$, so we can define the restriction Hopf algebra
map
\[
  \pi := \left.\iota^\circ\right|_{\cO_q(SU_{n+1})}\colon \cO_q(SU_{n+1})\to U_q(\fl_S)^\circ.
\]
Motivated by the classical situation, we denote  $\cO_q(U_n) := \pi\left(\cO_q(SU_{n+1})\right)$
(see~\cite{MEYER, MMF1}). Clearly, $\cO_q(U_n)$ is a~Hopf $\ast$-algebra.
The quantum homogeneous space
\[
  \cO_q(\mathbb{CP}^n) := \cO_q(SU_{n+1})^{\co\cO_q(U_{n})}
\]
corresponding to the surjective Hopf $\ast$-algebra map~$\pi$ is called \emph{quanutm projective space}.

\subsection{Line Bundles over Quantum Projective Space}
The quantum homogeneous space
$\cO_q(SU_{n+1}/SU_n)$
is the invariant subspace of~$\cO_q(SU_n)$ with respect to action of the Hopf subalgebra 
\begin{align*}
  U_q(\fsl_{n}) := \big< K_i, E_i, F_i \,|\, i = 2, \dots, n+1 \big> \subseteq U_q(\frak{sl}_{n+1}).
\end{align*} 
In this special case, the quantum space is usually denoted by $\cO_q(S^{2n+1})$ and called the
\emph{$(2n+1)$-dimensional quantum sphere}.

Since every finite-dimensional representation of~$\frak{sl}_{n+1}$ is contained in some tensor power of the
first fundamental representation~$V_{\varpi_1}$ of~$\frak{sl}_{n+1}$, the matrix coefficients of~$V_{\varpi_1}$
generate~$\cO_q(SU_{n+1})$ as an~algebra. In particular, we can choose a~weight basis~$\{v_j\}_{j=1}^{n+1}$
of~$V_{\varpi_1}$ such that the matrix coefficients $u^i_j := c^{\varpi_1}_{f_i,v_j}$, for $i,j = 1, \dots, n+1$,
coincide with the well-known FRT-presentation of~$\cO_q(SU_{n+1})$, see~\cite{FRT}
or~\cite[\textsection9]{KSLeabh} for details.

With respect to this presentation, the quantum sphere $\cO_q(S^{2n+1})$ is generated as an~algebra by the elements 
\[
  z_i := u^i_1, \qquad \text{and} \qquad  \overline{z}_i := S(u^1_i), \qquad \text{for } i = 1, \dots, n.
\]
The central element~$Z$ in~$U_q(\fl)$, see the discussion in~\cite[\S4.4]{Fano}, is explicitly given by
\begin{equation*}
  Z = K_1^n K_2^{n-1}\cdot \ldots \cdot K_n,
\end{equation*}
In terms of the $\mathbb{Z}$-grading induced by the action of~$Z$, 
\[
  \cO_q(S^{2n+1}) = \bigoplus_{k \in \mathbb{Z}} \cE_k,
\]
the elements~$z_i$ have degree~$1$, while the elements~$\bar z_j$ have degree~$-1$. Thus, for every $k \in \mathbb{N}_0$, as objects
\[
  \cE_k, \, \cE_{-k} \in \modz{\cO_q(SU_{n+1})}{\cO_q(\mathbb{CP}^n)}
\]
the line bundle~$\cE_k$ is generated by the element~$z^k_1$, while~$\cE_{-k}$ is generated by the element~$\overline{z}^k_1$.
  
Finally, for any $k \in \mathbb{N}$, we find it convenient to consider the element $v_{\pm k} \in \Phi(\cE_{\pm k})$, uniquely defined by 
\begin{align} \label{eqn:TAKonLINES}
  \unit(e) = e \otimes v_{\pm k}, & & \textrm{ for all } e \in \cE_{\pm k}.
\end{align} 
As is readily confirmed, $v_k = [z_1^k]$, for all $k \in \mathbb{N}$.

The irreducible quantum flag manifolds (and, in particular, quantum projective spaces) are distinguished by
the existence of an~essentially unique $q$-deformation of their classical de Rham complexes. The existence of
such a~canonical deformation is one of the most important results in the noncommutative geometry of quantum
groups, establishing it as a solid base from which to investigate more general classes of quantum spaces. The
following theorem is a direct consequence of results established in~\cite{HK}, \cite{HKdR}.

\begin{thm}\label{thm:HKClass}
  Over any quantum projective space~$\cO_q(\mathbb{CP}^n)$,
  there exists a unique finite-dimensional left $\cO_q(SU_n)$-covariant differential $*$-calculus
  \[
    \Omega^{\bullet}_q(\mathbb{CP}^n) \in \modz{\cO_q(SU_n)}{\cO_q(\mathbb{CP}^n)},
  \]
  of {classical dimension}, that is to say, satisfying
  \begin{align*}
    \dim \Phi\!\left(\Omega^{k}_q(\mathbb{CP}^n)\right) = \binom{2M}{k}, & & \text{ for all \,} k = 0, \dots, 2 M,
  \end{align*}
  where $M$ is the complex dimension of the corresponding classical manifold~$\mathbb{CP}^n$.
\end{thm}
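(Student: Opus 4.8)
The statement to prove (Theorem~\ref{thm:HKClass}) is an existence-and-uniqueness result for the Heckenberger--Kolb calculus over quantum projective space, attributed as ``a direct consequence'' of the work of Heckenberger--Kolb in \cite{HK} and \cite{HKdR}. So the proof I would write is not a from-scratch construction but an assembly of two cited inputs. The plan is to first recall the classification side: in \cite{HK}, Heckenberger and Kolb classified all finite-dimensional left-covariant first-order differential calculi $\Omega^1$ over the irreducible quantum flag manifolds $\cO_q(G/L_S)$, and showed that (for the $A$-series, i.e.\ $\cO_q(\mathbb{CP}^n)$) there are exactly two of dimension equal to the classical $2n$, namely the holomorphic and antiholomorphic parts $\Omega^{(1,0)}$ and $\Omega^{(0,1)}$, whose direct sum $\Omega^1 = \Omega^{(1,0)} \oplus \Omega^{(0,1)}$ is the unique $\ast$-calculus of classical first-order dimension $2n = 2M$ (here $M = n$ for $\mathbb{CP}^n$, but in general $M = \dim_{\mathbb{C}} G/L_S$). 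First I would state this precisely and note that it fixes $\Omega^1_q(\mathbb{CP}^n)$ uniquely, and that by construction it lies in $\modz{\cO_q(SU_n)}{\cO_q(\mathbb{CP}^n)}$.

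Next I would pass to higher degrees. The natural candidate for $\Omega^\bullet_q(\mathbb{CP}^n)$ is the maximal prolongation of $\Omega^1$, or equivalently (since we are in a covariant Hopf-module setting) the corresponding object built from the braided exterior algebra on $\Phi(\Omega^1) = \Lambda^1_H$. The content of \cite{HKdR} is precisely that this maximal prolongation has classical total dimension: $\dim \Phi(\Omega^k_q(\mathbb{CP}^n)) = \binom{2M}{k}$ for all $k$, and that it is the unique left-covariant differential $\ast$-calculus extending $\Omega^1$ with this dimension property (any other extension is either a quotient of strictly smaller dimension or fails the $\ast$-condition). So the second step of the write-up is: invoke \cite{HKdR} to see that the maximal prolongation of the $\Omega^1$ from Step~1 has the stated Hilbert-series dimensions $\binom{2M}{k}$, and that covariance is inherited from $\Omega^1$ because the maximal prolongation is functorial.

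The third step is the uniqueness in higher degree. Given that $\Omega^1$ is forced, any differential calculus $\Omega^\bullet$ extending it is a quotient of the maximal prolongation $\Omega^\bullet_{\max}$; since $\Omega^\bullet_{\max}$ already has the classical dimensions $\binom{2M}{k}$ and these are the \emph{largest} possible (one cannot exceed classical dimension in each degree while remaining a consistent calculus of ``classical dimension'' in the sense of the theorem --- indeed any proper quotient drops the dimension in some degree strictly below $\binom{2M}{k}$), the only such $\Omega^\bullet$ of classical dimension is $\Omega^\bullet_{\max}$ itself. Combining: $\Omega^\bullet_q(\mathbb{CP}^n) := \Omega^\bullet_{\max}$ exists, has the required dimensions, is left $\cO_q(SU_n)$-covariant, carries a $\ast$-structure, and is unique. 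I would then remark that the complex structure $\Omega^{(\bullet,\bullet)}$ promised implicitly (and used heavily later in the paper) comes from the splitting $\Lambda^1_H = \Lambda^{(1,0)} \oplus \Lambda^{(0,1)}$ already present at the first-order level in \cite{HK}, extended multiplicatively.

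The main obstacle in presenting this cleanly is not any single computation but the bookkeeping of \emph{which} normalisation of ``classical dimension'' and ``uniqueness'' is being used, and aligning the $A_n$ case conventions of \cite{HK,HKdR} (where results are often stated for all irreducible flag manifolds at once, with the parameter $q$ restricted to avoid roots of unity) with the $\ast$-algebra and comodule-algebra conventions fixed earlier in this excerpt. Concretely, the delicate point is justifying that no \emph{exotic} higher-degree calculus --- one not arising as a prolongation of the classical-dimensional $\Omega^1$, or arising from the ``wrong'' $\Omega^1$ --- can slip through: this is exactly where one must lean on the full force of \cite{HK} (that the classical-dimensional $\Omega^1$ is unique up to the choice of complex conjugate, and the $\ast$-condition kills that remaining ambiguity) together with \cite{HKdR} (that its maximal prolongation is again classical-dimensional and that the top form is one-dimensional, so the complex has the expected length $2M$). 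Everything else is a matter of citing and transcribing.
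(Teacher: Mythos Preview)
Your proposal is correct and aligns with the paper's treatment: the paper gives no proof of Theorem~\ref{thm:HKClass} at all, simply recording it as ``a direct consequence of results established in~\cite{HK}, \cite{HKdR}''. Your write-up is just a careful unpacking of that citation --- first-order classification from~\cite{HK}, classical dimensions of the maximal prolongation from~\cite{HKdR}, uniqueness via the quotient argument --- so there is nothing to compare beyond noting that you have spelled out what the paper leaves implicit.
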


The calculus $\Omega^{\bullet}_q(\mathbb{CP}^n)$, which we refer to as the \emph{Heckenberger--Kolb calculus} of
$\cO_q(\mathbb{CP}^n)$, has many remarkable properties. We recall here only the existence of a unique covariant
complex structure, following from the results of~\cite{HK}, \cite{HKdR}, and~\cite{MMF2}.

\subsection{A Quantum Principal Bundle Presentation of the Chern Connection of $\cO_q(\mathbb{CP}^n)$}

In this subsection we recall the quantum principal bundle description of the Heckenberger--Kolb calculus introduced in~\cite{MMF1}.
The constituent calculus~$\Omega^1_q(SU_{n+1})$ of the quantum principal bundle was originally constructed as
a~distinguished quotient of the standard bicovariant calculus on~$\cO_q(SU_{n+1})$; see~\cite{JurcoCalculi,MajidBocClass}.
Here we confine ourselves to those properties of the calculus which are relevant to our calculations below,
and refer the interested reader to~\cite[\textsection4]{MMF1}.
We stress that the calculus is far from being a~natural $q$-deformation of the space of differential 1-forms of~$SU_{n+1}$,
instead it should be considered as a~convenient tool for performing explicit calculations.  

The calculus is left $\cO_q(SU_{n+1})$-covariant, right $\cO_q(U_n)$-covariant, and  restricts  to the Heckenberger--Kolb calculus $\Omega^1_q(\mathbb{CP}^n)$. Thus it gives us a quantum principal bundle presentation of $\Omega^1_q(\mathbb{CP}^n)$, with associated short exact sequence
\begin{align*} 
0 \to \cO_q(SU_{n+1})\Om^1_q(\mathbb{CP}^n)\cO_q(SU_{n+1}) {\buildrel \iota \over \longrightarrow} \, \Om^1_q(SU_{n+1}) {\buildrel {~\overline{\mathrm{can}}~~}\over \longrightarrow} \cO_q(SU_{n+1}) \oby \Lambda^1_{\cO_q(U_n)} \to 0.
\end{align*}

Since the calculus is left~$\cO_q(SU_{n+1})$-covariant, it is an~$\cO_q(SU_{n+1})$-Hopf module. A~basis of $F(\Omega^1_q(SU_{n+1}))$ is given by
\begin{align*}
  e^+_i := [\exd u^{i+1}_1],  & &  e^0 := [\exd u^1_1], & & e^-_i := [\exd u^1_{i+1}], & & \textrm{ for } i = 1, \dots, n.
\end{align*}
Moreover, $[u^i_j] = 0$, if both $i,j \neq 1$. Let us now denote 
\begin{align*}
  \Lambda^{(1,0)} := \mathrm{span}_{\mathbb{C}}\{e^+_i \,|\, i=1, \dots, n\}, & & \Lambda^{(0,1)} := \mathrm{span}_{\mathbb{C}}\{e^-_i \,|\, i=1, \dots, n\}.
\end{align*}
The space $\Lambda^{(1,0)}\oplus \Lambda^{(0,1)}$ is a~right $\cO_q(SU_{n+1})$-sub-module of $\Phi_{\cO_q(SU_{n+1})}\!\left(\Omega^1_q(SU_{n+1})\right)$.
Explicitly, its $\cO_q(SU_{n+1})$-sub-module structure is given by 
\begin{align} \label{eqn:Lambdarightaction}
  e^{\pm}_i \tl u^k_k = q^{\delta_{i+1,k}+\delta_{1k} -2/(n+1)} e^{\pm}_i, & & e^{\pm}_i \tl u^k_l = 0, \textrm{ ~~ for all } k \neq l.
\end{align}
It is important to note that the subspace $\mathbb{C}e^0$ is \emph{not} an~$\cO_q(SU_{n+1})$-sub-module of~$F\!\left(\cO_q(SU_{n+1})\right)$,
nor is it even an~$\cO_q(S^{2n-1})$-sub-module.
However, as shown in~\cite[Proposition 6.2]{MMF1},
it \emph{is} a~sub-module over $\mathbb{C}\langle z_1 \rangle$, the $*$-sub-algebra of~$\cO_q(S^{2n+1})$ generated by~$z_1$.

It follows from the results of~\cite[\textsection5]{MMF1} that
\[
  F\left(\cO_q(SU_{n+1})\Omega^1_q(\mathbb{CP}^n)\cO_q(SU_{n+1})\right) =   \Lambda^{(1,0)} \oplus  \Lambda^{(0,1)}.
\]
Moreover, a~decomposition of right~$\cO_q(U_n)$-comodules is given by
\begin{align*}
  F\left(\Omega^1_q(SU_{n+1})\right) = \Lambda^{(1,0)} \oplus \Lambda^{(0,1)} \oplus  \mathbb{C} e^0.
\end{align*}
Thus we have a~left $\cO_q(SU_{n+1})$-covariant strong principal connection~$\Pi$, uniquely defined by 
\begin{align*} 
  F(\Pi): F\!\left(\Omega^1_q(SU_{n+1})\right) \to \mathbb{C}e^0.
\end{align*}

For an~arbitrary covariant vector bundle~$\cF$, let us now look at the associated connection 
\[
  \nabla:\cF \to \Omega^{1}_q(\mathbb{CP}^n) \otimes_{\cO_q(\mathbb{CP}^n)} \cF
\] 
associated to $\Pi$. The linear map
\[
  \adel_{\cF} := (\proj_{\Omega^{(0,1)}} \otimes \id) \circ \nabla: \cF \to \Omega^{(0,1)}  \otimes_{\cO_q(\mathbb{CP}^n)}  \cF
\]
is a~$(0,1)$-connection.
Moreover, we have an~analogously defined $(1,0)$-connection for~$\cF$,
which we denote by~$\del_{\cF}$. Consider next  the obvious linear projections
\begin{align} \label{eqn:projections}
  \Pi^{(1,0)}:F\!\left(\Omega^1_q(SU_{n+1})\right) \to \Lambda^{(1,0)}, & & \Pi^{(0,1)}: F\!\left(\Omega^1_q(SU_{n+1})\right) \to \Lambda^{(0,1)}.
\end{align}
In terms of these operators, we have the following useful formulae:
\begin{align*}
  \del_{\cF} =  j^{-1} \circ ((\Pi^{(1,0)} \circ \exd) \otimes \mathrm{id}) \circ \unit,\\
  \adel_{\cF} = j^{-1} \circ ((\Pi^{(0,1)} \circ \exd) \otimes \mathrm{id}) \circ \unit.
\end{align*}

For the special case of the covariant line bundles, it follows from the uniqueness of $(0,1)$-connections,
presented in~\cite[Theorem 4.5]{HolVBs}, that $\adel_{\cE_k}$ is equal to 
the holomorphic structure of $\cE_k$, justifying the choice of notation. We have an analogous result
for the $(1,0)$-connection~$\del_{\cE_k}$.
Thus $\nabla = \del_{\cE_k} + \adel_{\cE_k}$ is equal to the Chern connection of~$\cE_k$.

\subsection{Chern Curvature of the Positive Line Bundles of $\cO_q(\mathbb{CP}^n)$} \label{section:CPNCurvature}

In this subsection we explicitly calculate the curvature of the positive line bundles over quantum projective space.
We begin with the following technical lemma.

\begin{lem} \label{lem:qLieb}
  It holds that, for all $k \in \mathbb{N}$, 
  \[
    \Pi^{(1,0)} \circ \exd(z^k_1) = (k)_{q^{2/(n+1)}}\!\left(\Pi^{(1,0)} \circ \exd(z_1)\right)\!z_1^{k-1}.
  \]
\end{lem}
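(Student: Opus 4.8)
The plan is to prove the identity by induction on $k$, using the Leibniz rule for $\exd$ together with the explicit right action of $\cO_q(SU_{n+1})$ on $\Lambda^{(1,0)}$ recorded in~\eqref{eqn:Lambdarightaction}. The base case $k=1$ is trivial since $(1)_{q^{2/(n+1)}} = 1$. For the inductive step, I would write $z_1^{k+1} = z_1^k z_1$ and apply the Leibniz rule to get $\exd(z_1^{k+1}) = (\exd z_1^k) z_1 + z_1^k \, \exd z_1$. The key point is that $\Pi^{(1,0)}$ is not simply a left $\cO_q(SU_{n+1})$-module map on all of $\Omega^1_q(SU_{n+1})$, so I cannot just pull $z_1^k$ and $z_1$ out; instead I must pass through the Hopf-module picture, applying $\unit$ and working with $F(\Omega^1_q(SU_{n+1})) = \Lambda^{(1,0)} \oplus \Lambda^{(0,1)} \oplus \mathbb{C}e^0$, on which the right action is diagonalised by~\eqref{eqn:Lambdarightaction}.

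Concretely, the main computation is to understand how $\Pi^{(1,0)}$ interacts with right multiplication by $z_1 = u^1_1$. When I translate $(\exd z_1^k) z_1$ into $F$-space, the element $z_1$ acts on the right via its comodule structure; since $z_1 = u^1_1$ has a single diagonal matrix-coefficient contribution $u^1_1$ relevant to the $\Lambda$-components, the action~\eqref{eqn:Lambdarightaction} shows that right multiplication by $z_1$ scales $e^+_i$ by $q^{\delta_{1,1}-2/(n+1)} = q^{1-2/(n+1)}$ on the $\Lambda^{(1,0)}$-part, while the off-diagonal matrix coefficients $u^1_k$, $k\neq 1$, annihilate $\Lambda^{(1,0)}$. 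The net effect is that commuting $\Pi^{(1,0)}$ past right multiplication by $z_1$ produces exactly the scalar $q^{-2/(n+1)}$ relative to naively commuting $z_1$ through (the extra $q^1$ being absorbed by the grading bookkeeping on $z_1^{k-1}$ versus $z_1^k$), which is precisely the factor appearing in the $q$-integer recursion $(k+1)_t = 1 + t \cdot (k)_t$ with $t = q^{2/(n+1)}$.

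Assembling this: by the inductive hypothesis $\Pi^{(1,0)} \exd(z_1^k) = (k)_{q^{2/(n+1)}} (\Pi^{(1,0)} \exd z_1) z_1^{k-1}$, so the first term $(\exd z_1^k)z_1$ contributes $(k)_{q^{2/(n+1)}} (\Pi^{(1,0)} \exd z_1) z_1^{k-1} z_1$ up to the scaling correction described above, which I expect to yield $q^{2/(n+1)} (k)_{q^{2/(n+1)}} (\Pi^{(1,0)} \exd z_1) z_1^{k-1}$, while the second term $z_1^k \exd z_1$ contributes $(\Pi^{(1,0)} \exd z_1) z_1^{k-1}$ after a parallel scaling argument applied to $z_1^{k-1}$ acting on the right. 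Summing gives $(1 + q^{2/(n+1)}(k)_{q^{2/(n+1)}})(\Pi^{(1,0)}\exd z_1)z_1^{k-1} = (k+1)_{q^{2/(n+1)}} (\Pi^{(1,0)}\exd z_1)z_1^{k-1}$, completing the induction.

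The main obstacle is the careful bookkeeping in the Hopf-module translation: $\Pi^{(1,0)}$ lives on $F(\Omega^1_q(SU_{n+1}))$, and one must be scrupulous about how $\unit$, the coactions, and the right $\cO_q(SU_{n+1})$-module structure on $\Lambda^{(1,0)}\oplus\Lambda^{(0,1)}\oplus\mathbb{C}e^0$ interact when elements of $\cO_q(S^{2n+1})$ are moved across the tensor product in $\Omega^1(B)A \, \square_H \Phi(\cF)$. In particular one should check that the $e^0$-component, which is \emph{not} a submodule, does not contaminate the computation — but since we apply $\Pi^{(1,0)}$ at the end and right multiplication by powers of $z_1$ preserves $\Lambda^{(1,0)}$ by~\eqref{eqn:Lambdarightaction}, this is controlled. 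Everything else is the routine verification that the resulting scalar matches the $q$-integer recursion.
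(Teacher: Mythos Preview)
Your overall strategy --- induction on $k$ via the Leibniz rule, with the explicit right action~\eqref{eqn:Lambdarightaction} supplying the $q$-factor --- is exactly the paper's. But the execution has two concrete problems.

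First, you assert that $\Pi^{(1,0)}$ is \emph{not} a left $\cO_q(SU_{n+1})$-module map. It is: under the Hopf-module identification $\Omega^1_q(SU_{n+1}) \simeq \cO_q(SU_{n+1}) \otimes F(\Omega^1_q(SU_{n+1}))$ the projection acts as $\id \otimes F(\Pi^{(1,0)})$, which is trivially left linear. The paper uses this directly to pull $z_1^{k}$ out of $\Pi^{(1,0)}(z_1^{k}\,\exd z_1)$; there is no need for the detour through $\unit$ you describe.

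Second, your ``scaling correction'' is imprecise and your assembled answer is off by one in the power of $z_1$ (the target of the inductive step is $(k+1)_t\,(\Pi^{(1,0)}\exd z_1)\,z_1^{k}$, not $z_1^{k-1}$). What is actually needed is the clean commutation relation
\[
  z_1\,\big(\Pi^{(1,0)}\exd z_1\big) \;=\; q^{2/(n+1)}\,\big(\Pi^{(1,0)}\exd z_1\big)\,z_1,
\]
which the paper derives explicitly by combining $e^+_i \tl u^1_1 = q^{1-2/(n+1)}e^+_i$ from~\eqref{eqn:Lambdarightaction} with the FRT relation $u^1_a u^1_1 = q^{-1} u^1_1 u^1_a$. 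Together with the fact that $\Pi^{(1,0)}$ is a \emph{right} $\mathbb{C}\langle z_1\rangle$-module map --- which requires all three summands $\Lambda^{(1,0)}$, $\Lambda^{(0,1)}$, $\mathbb{C}e^0$ to be $\mathbb{C}\langle z_1\rangle$-stable, not merely $\Lambda^{(1,0)}$ as your final paragraph suggests --- the inductive step is then immediate: writing $\omega = \Pi^{(1,0)}\exd z_1$ and $t = q^{2/(n+1)}$, the two Leibniz terms contribute $(k)_t\,\omega\, z_1^{k}$ and $t^{k}\,\omega\, z_1^{k}$, summing to $(k+1)_t\,\omega\, z_1^{k}$.
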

\begin{proof}
  We will prove the formula using induction. For $k=1$, the formula is trivially satisfied.
  For $k=2$, we see that
  \begin{align*}
    \unit\!\left(\Pi^{(1,0)} \circ \exd(z_1)z_1\right) = {}& \, \unit\!\left(\Pi^{(1,0)} \circ \exd(u^1_1)\right)\!u^1_1 \\
    {}={} &  \left(\sum_{a=2}^n u^1_a \otimes [\exd(u^a_1)]\right)\!u^1_1 \\
    {}={} & \sum_{b=1}^n \sum_{a=2}^n u^1_au^b_1 \otimes [\exd(u^a_1)u^1_b].
  \end{align*}
  Recalling  the identities in~\eqref{eqn:Lambdarightaction} and the definition of~$\Pi^{(1,0)}$ given in~\eqref{eqn:projections}, we see that
  \begin{align*}
    \sum_{b=1}^n \sum_{a=2}^n u^1_au^b_1 \otimes [\exd(u^a_1)u^1_b] = \sum_{a=2}^n u^1_au^1_1 \otimes [\exd(u^a_1)u^1_1] = & ~~  q^{1 -\frac{2}{n+1}} \sum_{a=2}^n  u^1_au^1_1 \otimes [\exd(u^a_1)].
  \end{align*}
  The commutation relations of~$\cO_q(SU_{n+1})$ tell us that $u^1_au^1_1 = q^{-1}u^1_1 u^1_a$
  (see, for example, \cite[\textsection1]{FRT} or~\cite[\textsection9.2]{KSLeabh} for details). Thus 
  \begin{align*}
    q^{1 -\frac{2}{n+1}} \sum_{a=2}^n  u^1_au^1_1 \otimes [\exd u^a_1] = & ~ q^{-\frac{2}{n+1}} u^1_1 \sum_{a=2}^n u^1_a \otimes [\exd u^a_1] \\
    = & ~ q^{-\frac{2}{n+1}}  \unit\left( z_1\, \Pi^{(1,0)} \circ \exd(z_1)\right)\!.
  \end{align*}
  Hence we see that $z_1\! \left(\Pi^{(1,0)} \circ \exd(z_1)\right) = q^{\frac{2}{n+1}} \left(\Pi^{(1,0)} \circ \exd(z_1)\right)\!z_1$.

  Let us now assume that the formula holds for some general~$k$. By the Leibniz rule
  \begin{align*}
    \Pi^{(1,0)} \circ \exd(z^{k+1}_1) = & \,   \Pi^{(1,0)}\!\left((\exd z_1^{k-1})z_1 + z_1^{k-1} \exd z_1\right).
  \end{align*}
  Since $\Pi^{(1,0)}$ is a~left $\cO_q(SU_{n+1})$-module map, it must hold that 
  \begin{align*}
    \Pi^{(1,0)}\!\left((\exd z_1^{k-1})z_1 + z_1^{k-1} \exd z_1\right) = & \, \Pi^{(1,0)}\!\left(\exd z_1^{k-1}z_1\right)  + z_1^{k-1} \Pi^{(1,0)}\left(\exd z_1\right).
  \end{align*}
  Moreover, since $\mathbb{C}e^0$ is a~$\mathbb{C}\langle z_1 \rangle$-sub-module of $F\!\left(\Omega^1_q(SU_{n+1})\right)$,
  the projection~$\Pi^{(1,0)}$ must be a~right $\mathbb{C}\langle z_1 \rangle$-module map.
  Thus we see that 
  \begin{align*}
    \Pi^{(1,0)}\!\left(\exd z_1^{k-1}z_1\right)  + z_1^{k-1} \Pi^{(1,0)}\left(\exd z_1\right) = \Pi^{(1,0)}\!\left(\exd z_1^{k-1}\right)z_1  + z_1^{k-1} \Pi^{(1,0)}\left(\exd z_1\right).
  \end{align*}
 
  Using our inductive assumption, we can reduce this expression to 
  \begin{align*}
    (k-1)_{q^{2/(n+1)}} \Pi^{(1,0)}\!\left(\exd z_1\right) z^{k-1}_1 + q^{2(k-1)/(n+1)} \Pi^{(1,0)}(\exd z_1) z^{k-1}_1.
  \end{align*}
  By the definition of the quantum integer, this in turn reduces to 
  \begin{align*}
    (k)_{q^{2/(n+1)}} \Pi^{(1,0)}\!\left(\exd z_1\right) z^{k-1}_1.
  \end{align*}
  The claimed formula now follows by induction.
\end{proof}

\begin{thm} \label{tm:curvk}
  For any positive line bundle~$\cE_k$ over quantum projective space~$\mathcal{O}_q(\mathbb{CP}^n)$, it holds that 
  \begin{align*}
    \nabla^2(e) =  -(k)_{q^{-2/(n+1)}} \mathbf{i} \kappa \otimes e, & & \textrm{ for all } e \in \mathcal{E}_k,
  \end{align*}
  where we have chosen the unique K\"ahler form~$\kappa$ satisfying
  \begin{align} \label{eqn:firstCPNChern}
    \nabla^2(e) = - \mathbf{i} \kappa \otimes e, & & \text{for all }   e \in \cE_1.
  \end{align}
\end{thm}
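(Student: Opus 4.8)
The plan is to reduce the curvature computation on $\cE_k$ to the $k=1$ case via Lemma~\ref{lem:qLieb}, together with the analogous statement for $\Pi^{(0,1)}$. First I would use the quantum principal bundle formulae for $\del_{\cE_k}$ and $\adel_{\cE_k}$ recalled just before Section~\ref{section:CPNCurvature}. Applying these to the generator $z_1^k$ of $\cE_k$ and using $\unit(z_1^k) = z_1^k \otimes v_k$ from \eqref{eqn:TAKonLINES}, the holomorphic and antiholomorphic parts of the Chern connection $\nabla = \del_{\cE_k} + \adel_{\cE_k}$ are computed from $\Pi^{(1,0)} \circ \exd(z_1^k)$ and $\Pi^{(0,1)} \circ \exd(z_1^k)$. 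By Lemma~\ref{lem:qLieb} the first equals $(k)_{q^{2/(n+1)}} \big(\Pi^{(1,0)} \circ \exd(z_1)\big) z_1^{k-1}$; I would establish (by the same induction, or by applying the $*$-operation and using $\del(\omega^*) = (\adel\omega)^*$) the companion identity
\[
  \Pi^{(0,1)} \circ \exd(z_1^k) = (k)_{q^{-2/(n+1)}}\big(\Pi^{(0,1)}\circ\exd(z_1)\big)z_1^{k-1},
\]
the sign flip in the exponent coming from the right action in \eqref{eqn:Lambdarightaction} on $e^-_i$ versus $e^+_i$ (equivalently, $\bar z$-conjugates carry the opposite $Z$-weight).

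Next I would compute $\nabla^2(z_1^k)$. Extending $\nabla$ to $\Omega^\bullet_q(\mathbb{CP}^n)\otimes_{\cO_q(\mathbb{CP}^n)}\cE_k$ as in \eqref{eqn:connLR} and using $\nabla^2 = (\del_{\cE_k}+\adel_{\cE_k})^2 = \del_{\cE_k}^2 + (\del_{\cE_k}\adel_{\cE_k} + \adel_{\cE_k}\del_{\cE_k}) + \adel_{\cE_k}^2$, flatness of the holomorphic structure gives $\adel_{\cE_k}^2 = 0$, and the analogous statement for the $(1,0)$-connection gives $\del_{\cE_k}^2 = 0$, so only the mixed term survives and the curvature is a $(1,1)$-form times $e$. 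Carrying the scalars $(k)_{q^{\pm 2/(n+1)}}$ through the two compositions and through the reordering of $z_1^{k-1}$ past the one-forms (using the commutation relations as in the proof of Lemma~\ref{lem:qLieb}), the curvature of $\cE_k$ on its generator should come out as a single $q$-integer multiple of the curvature of $\cE_1$. The bookkeeping here is the delicate part: one must track which of $(k)_{q^{2/(n+1)}}$ and $(k)_{q^{-2/(n+1)}}$ multiplies which half, and verify that after combining the mixed terms and normalising by \eqref{eqn:firstCPNChern} the surviving coefficient is exactly $(k)_{q^{-2/(n+1)}}$, matching the Podle\'s sphere case $n=1$ where it becomes $(k)_{q^{-1}}$ (and $(k)_{q^{-2/3}}$ for $\mathbb{CP}^2$).

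Finally, since $\nabla$ is a left $\cO_q(SU_{n+1})$-comodule map and $\cE_k$ is generated as a left $\cO_q(\mathbb{CP}^n)$-module by $z_1^k$, while $\nabla^2$ is a left $\cO_q(\mathbb{CP}^n)$-module map, the value of $\nabla^2$ on the generator determines it on all of $\cE_k$: for $e = b\cdot z_1^k$ with $b \in \cO_q(\mathbb{CP}^n)$ we get $\nabla^2(e) = b\cdot\nabla^2(z_1^k) = -(k)_{q^{-2/(n+1)}}\,\mathbf{i}\,b\kappa\otimes z_1^k = -(k)_{q^{-2/(n+1)}}\,\mathbf{i}\,\kappa\otimes e$, using that $\kappa$ is central enough (it is a left-covariant $2$-form and $\nabla^2$ is $\cO_q(\mathbb{CP}^n)$-linear). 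The main obstacle I anticipate is not conceptual but computational: correctly propagating the $q$-powers from \eqref{eqn:Lambdarightaction} through the double application of the connection in the mixed term $\del_{\cE_k}\adel_{\cE_k} + \adel_{\cE_k}\del_{\cE_k}$, and confirming that the two contributions assemble into the single asserted quantum integer rather than a sum of two different ones — this is exactly the ``$q$-deformed additivity'' phenomenon flagged in the introduction, and making it come out cleanly is the crux of the argument.
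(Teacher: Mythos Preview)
Your overall strategy matches the paper's: evaluate at the generator $z_1^k$, use Lemma~\ref{lem:qLieb} to pull the $q$-integer out of $\del_{\cE_k}(z_1^k)$, and reduce to the $k=1$ normalisation~\eqref{eqn:firstCPNChern}. But you miss the one observation that makes the argument clean, and as a result you anticipate a difficulty that never arises.

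The point is that $z_1^k$ is a \emph{holomorphic} section: $\adel_{\cE_k}(z_1^k)=0$. Concretely, $\unit(\exd z_1)=\sum_a u^1_a\otimes[\exd u^a_1]$ lands entirely in $A\otimes(\Lambda^{(1,0)}\oplus\bC e^0)$, so $\Pi^{(0,1)}\circ\exd(z_1)=0$; hence your proposed companion identity is the triviality $0=0$ and carries no information. (Your suggested derivation via the $*$-operation would in any case produce a statement about $\bar z_1^{\,k}$, not $z_1^k$.) With $\adel_{\cE_k}(z_1^k)=0$ the term $\del_{\cE_k}\adel_{\cE_k}(z_1^k)$ drops out and
\[
\nabla^2(z_1^k)=\adel_{\cE_k}\circ\del_{\cE_k}(z_1^k),
\]
so only \emph{one} $q$-integer enters, coming straight from Lemma~\ref{lem:qLieb}; there is no ``assembling of two different $q$-integers'' to negotiate. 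The paper then writes $\del_{\cE_k}(z_1^k)$ explicitly as $(k)_{\alpha}\sum_i \del\big(u^1_1S(u^1_i)\big)\otimes z_iz_1^{k-1}$, applies the extended $\adel_{\cE_k}$, and uses $\adel z_i=0$ once more so that only the forms $\adel\del\big(u^1_1S(u^1_i)\big)$ survive --- precisely the $k=1$ curvature data, right-multiplied by $z_1^{k-1}$. Your final paragraph, passing from the value on $z_1^k$ to all of $\cE_k$ via left $B$-linearity of $\nabla^2$, is correct.
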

\begin{proof}
  We are free to calculate the curvature of~$\cE_k$ by letting~$\nabla^2$ act on any non-zero element of~$\cE_k$.
  The element~$z_1^k$ presents itself as a~convenient choice since $\adel_{\mathcal{E}_k}(z_1^k) = 0$, as proved in~\cite[\S???]{MMF1}.
  In particular, it holds that 
  \[
    \nabla^2(z^k_1) = (\adel_{\cE_k} \circ \del_{\cE_k} + \del_{\cE_k} \circ \adel_{\cE_k})(z^k_1) = \adel_{\cE_k} \circ \del_{\cE_k}(z^k_1).
  \]

  For convenience, let us now denote $\alpha := q^{-2/(n+1)}$.
  From the quantum principal bundle presentation of~$\del_{\cE_k}$ given in the previous subsection,
  together with Lemma~\ref{lem:qLieb}, we see that 
  \begin{align*}
    \del_{\cE_k}(z_1^k) ={}  & j^{-1}\!\left(\Pi^{(1,0)} \circ \exd(z^k_1) \otimes v_k\right)\\
                     {} ={} & (k)_{\alpha} \, j^{-1}\!\left( \left(\Pi^{(1,0)} \circ \exd(z^k_1)\right) \!z^{k-1}_1 \otimes v_k\right)\!,
  \end{align*}
  where in the first identity we have used~\eqref{eqn:TAKonLINES}.
  We now present this expression as an~element in $\Omega^{(1,0)} \otimes_B (A \square_H \Phi(\mathcal{E}_k))$: 
  \begin{align*}
    (k)_{\alpha} j^{-1}\!\left(\left(\Pi^{(1,0)}\circ \exd(z_1)\right)\!z^{k-1}_1 \otimes v_k\right) = {}
    & \sum_{i=1}^{n+1} (k)_{\alpha} j^{-1}  \left(\Pi^{(1,0)}\circ \exd(u^1_1)S(u^1_i)u^i_1 z^{k-1}_1 \otimes v_k\right)\\
    {} = {} & \sum_{i=1}^{n+1} (k)_{\alpha} \del(u^1_1S(u^1_i)) \otimes (z_i z_1^{k-1} \otimes v_k).
  \end{align*}

  Acting on this element by~$\adel_{\mathcal{E}_k}$, and recalling that~$\adel z_i = 0$, for all $i = 1,\dots, n$,  gives us the identity
  \begin{align*}
    \sum_{i=1}^{n+1}\adel_{\mathcal{E}_k}\!\Big((k)_{\alpha} \del(u^1_1S(u^1_i)) \otimes (z_i z_1^{k-1} \otimes v_k)\Big) = {}
    & \sum_{i=1}^{n+1} (k)_{\alpha}  \adel \del(u^1_1S(u^1_i)) \otimes (z_i z_1^{k-1} \otimes v_k).
  \end{align*}
  Operating by $\id \otimes \unit^{-1}$ produces the expression
  \begin{align*}
    (k)_{\alpha} \sum_{i=1}^{n+1}\adel \del(u^1_1S(u^1_i)) \otimes z_i z_1^{k-1}  = {} & (k)_{\alpha}  \nabla(u^1_1) z^{k-1}_1,
  \end{align*}
  where the right multiplication of~$\nabla(u^1_1)$ by~$z^{k-1}_1$ is defined with respect to
  the canonical embeddings of $\Omega^{(1,1)} \otimes_B \cE_1$ and $\Omega^{(1,1)} \otimes_B \cE_k$
  into $\Omega^{(1,1)} \otimes_B \cO_q(S^{2n-1})$.
  Finally, recalling that we have chosen a~scaling of our K\"ahler form to satisfy~\eqref{eqn:firstCPNChern}, we have
  \begin{align*}
    \nabla^2(z^k_1) =  (k)_{\alpha} (-\mathbf{i} \, \kappa \otimes z_1)z^{k-1}_1 
    =  - (k)_{\alpha} \mathbf{i}  \kappa \otimes z_1^k,
  \end{align*}
  which gives us the claimed identity.
\end{proof}

\begin{remark}
  It is worth noting that the Chern curvature is clearly independent of any quantum principal bundle
  presentation of the calculus~$\Omega^1_q(\mathbb{CP}^{n})$.
  However, the quantum bundle presentation allows us to calculate the curvature in a~systematic manner, and
  provides us with concrete insight into why the curvature undergoes a $q$-integer deformation.
\end{remark}

\appendix

\section{Some Categorical  Equivalences} \label{app:A}

In this appendix we present a~number of categorical equivalences, all ultimately derived from Takeuchi's
equivalence~\cite{Tak}.
(We note that similar results hold under much weaker assumptions, see~\cite{Skryabin2007}.)
These equivalences play a~prominent role in the paper, giving us a~formal framework
in which to understand covariant differential calculi as noncommutative homogeneous vector bundles.

\subsection{Takeuchi's Bimodule  Equivalence} \label{app:TAK}

Let $A$ and~$H$ be Hopf algebras, and $B = A^{\co(H)}$ the quantum homogeneous space associated to
a~surjective Hopf algebra map $\pi:A \to H$. We define $\qMod{A}{B}{}{B}$ to be the category whose objects are 
left \mbox{$A$-comodules} \mbox{$\DEL_L:\mathcal{F} \to A \otimes \mathcal{F}$}, endowed with a~$B$-bimodule
structure, such that
\begin{align} \label{eqn:TakCompt}
  \DEL_L(bfc) = \Delta_L(b)\DEL_L(f)\Delta_L(c),  & & \text{ for all } f \in \mathcal{F}, b,c \in B,
\end{align}
and whose morphisms are left $A$-comodule, $B$-bimodule, maps.
Let $\lMod{H}{}$ denote the category whose objects are left $H$-comodules, and whose morphisms are left $H$-comodule maps.

If $\cF \in \qMod{A}{B}{}{B}$, and $B^+ := B \cap \ker(\e:A \to \mathbb{C})$, then~$\cF/(B^+\cF)$ becomes
an~object in~$\qMod{H}{}{}{B}$ with the obvious right $B$-action, and left $H$-coaction given by 
\begin{align} \label{comodstruc0}
  \DEL_L[f] = \pi(f_{(-1)}) \oby [f_{(0)}], & & \text{ for } f \in \cF,
\end{align}
where~$[f]$ denotes the coset of~$f$ in  $\cF/(B^+\cF)$. A functor 
\begin{align} \label{eqn:functorPHI}
  \Phi: \qMod{A}{B}{}{B} \to \qMod{H}{}{}{B}
\end{align}
is now defined as follows:  
$\Phi(\cF) := \cF/(B^+\cF)$, and if $g : \cF \to {\mathcal D}$ is a~morphism in~$\qMod{A}{B}{}{B}$,
then $\Phi(g):\Phi(\cF) \to \Phi({\mathcal D})$ is the map uniquely defined by $\Phi(g)[f] := [g(f)]$.

If $V \in \qMod{H}{}{}{B}$ with coaction $\Delta_L : V \to H \otimes V$, then the {\em cotensor product} of~$A$ and~$V$ is defined by
\begin{align*}
  A \coby V := \ker(\DEL_R \oby \id - \id \oby \DEL_L: A\oby V \to A \oby H \oby V),
\end{align*}
where $\Delta_R : A \to A \otimes H$ denotes the homogenous right $H$-coaction on~$A$.
The cotensor product becomes an~object in~$\qMod{A}{B}{}{B}$ by defining a~left $B$-bimodule structure,
and left $A$-comodule structure, on the first tensor factor in the obvious way, and defining a~right $B$-module structure by
\begin{align*}
  \left(\sum a_i \otimes v_i\right)b := \sum a_ib_{(1)} \otimes \left(v_i \tl b_{(2)}\right), 
\end{align*}
for any $b \in B$, and any $\sum a_i \otimes v_i  \in  A \square_H V$.
A functor 
\[
  \Psi: \qMod{H}{}{}{B} \to \qMod{A}{B}{}{B}
\] 
is now defined as follows: 
\(
\Psi(V) := A \square_H V,
\)
and if~$\gamma$ is a~morphism in~$\qMod{H}{}{}{B}$, then $\Psi(\gamma) := \id \oby \gamma$. 

For a~quantum homogeneous space $B = A^{\co(H)}$,
the algebra~$A$ is said to be {\em faithfully flat} as a~right $B$-module if the functor
$A \oby_B -: \lMod{}{B} \to \lMod{}{\mathbb{C}}$,
from the category of left $B$-modules to the category of complex vector spaces,
preserves and reflects exact sequences.
As shown in~\cite[Corollary 3.4.5]{Chirva},
for any coideal \mbox{$*$-subalgebra} of a~CQGA faithful flatness is automatic.
For example, $\cO_q(G)$ is faithfully flat as a~left module over any quantum flag manifold $\cO_q(G/L_S)$.
The following equivalence was established in~\cite[Theorem 1]{Tak}.

\begin{thm}[Takeuchi's Equivalence] \label{thm:TakEquiv}
  Let $B = A^{\co(H)}$ be a~quantum homogeneous space such that~$A$ is faithfully flat as a~right $B$-module.
  An~adjoint equivalence of categories between  $\qMod{A}{B}{}{B}$ and~$\qMod{H}{}{}{B}$
  is given by the functors~$\Phi$ and~$\Psi$ and unit, and counit,  natural isomorphisms
  \begin{align*}
    \unit: \cF \to \Psi \circ \Phi(\cF), & & f \mto f_{(-1)} \oby [f_{(0)}], \\
    \counit:\Phi \circ  \Psi(V) \to V, \,  & & \Big[\sum_i a^i \oby v^i\Big] \mto \sum_{i} \e(a^i)v^i.
  \end{align*} 
\end{thm}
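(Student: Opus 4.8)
The plan is to recognise the claim as the assertion that the adjunction $\Phi \dashv \Psi$, with unit $\unit$ and counit $\counit$, is an \emph{adjoint} equivalence. This amounts to checking that (a)~$\unit$ and $\counit$ are well-defined natural transformations, (b)~they satisfy the two triangle identities, and (c)~they are isomorphisms. Steps (a) and (b) are purely formal and use nothing beyond the Hopf-algebraic axioms; faithful flatness of $A$ over $B$ enters only in step (c), which is the substantive part.

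First I would dispatch (a) and (b). For $\unit_\cF$, coassociativity of the left $A$-coaction on $\cF$, together with the formula $\Delta_L[f] = \pi(f_{(-1)}) \oby [f_{(0)}]$, shows that $f_{(-1)} \oby [f_{(0)}]$ is annihilated by $\Delta_R \oby \id - \id \oby \Delta_L$, so that it indeed lands in $A \coby \Phi(\cF)$; compatibility with the $A$-coaction and with the $B$-bimodule structure then follows from~\eqref{eqn:TakCompt}. For $\counit_V$, since $\e$ annihilates $B^+$ the formula descends to the quotient $\Phi\Psi(V)$, and it is checked directly to be a left $H$-comodule, right $B$-module map. Naturality of both is immediate from the formulas. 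For the triangle identities, the counit axiom gives $\counit_{\Phi(\cF)}\big(\Phi(\unit_\cF)[f]\big) = \e(f_{(-1)})[f_{(0)}] = [f]$, while the fact that the $A$-coaction on $A \coby \Phi(\cF)$ is comultiplication on the first leg gives $\Psi(\counit_V)\big(\unit_{\Psi(V)}(\textstyle\sum_i a^i \oby v^i)\big) = \sum_i a^i_{(1)}\e(a^i_{(2)}) \oby v^i = \sum_i a^i \oby v^i$. Thus $\Phi \dashv \Psi$ is an adjunction.

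It remains to prove (c). For $\counit_V$ I would use the standard consequence of faithful flatness that $\ker \pi = B^+ A$, so that $\pi$ descends to an isomorphism $A/B^+A \cong H$. Flatness then allows the quotient by $B^+$ to be commuted past the cotensor product, identifying
\[
  \Phi\Psi(V) = (A \coby V)/B^+(A \coby V) \; \cong \; (A/B^+A) \coby V \; \cong \; H \coby V,
\]
and the canonical isomorphism $H \coby V \cong V$ recovers exactly $\counit_V$ (via $\e_H \circ \pi = \e$).

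The main obstacle is the invertibility of the unit, which is the noncommutative structure theorem for relative Hopf modules. Here I would exploit that, in the homogeneous faithfully flat setting, the canonical map $\mathrm{can}\colon A \oby_B A \to A \oby H$ is bijective, so that $A$ is a Hopf--Galois extension of $B$. Applying the faithfully exact functor $A \oby_B (-)$ to $\unit_\cF$ and using flatness to commute $A \oby_B (-)$ with the kernel defining the cotensor, one obtains
\[
  A \oby_B (A \coby \Phi(\cF)) \cong (A \oby_B A) \coby \Phi(\cF) \cong (A \oby H) \coby \Phi(\cF) \cong A \oby \Phi(\cF),
\]
and, comparing with the structure isomorphism $A \oby_B \cF \cong A \oby \Phi(\cF)$ of relative Hopf modules, one checks that $A \oby_B \unit_\cF$ is identified with the identity map of $A \oby \Phi(\cF)$. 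Since $A$ is faithfully flat over $B$, the functor $A \oby_B (-)$ reflects isomorphisms, whence $\unit_\cF$ itself is an isomorphism. With both $\unit$ and $\counit$ shown to be natural isomorphisms, the triangle identities upgrade the adjunction $\Phi \dashv \Psi$ to an adjoint equivalence, completing the proof.
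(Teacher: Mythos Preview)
The paper does not supply a proof of this theorem: it is quoted in the appendix as background, with the argument attributed to \cite[Theorem~1]{Tak}. There is thus nothing in the paper against which to compare your proposal.

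That said, your outline is a correct reconstruction of the standard proof of Takeuchi's equivalence. The formal parts (a) and (b) are fine as written, and for (c) the strategy of showing that $A \otimes_B \unit_\cF$ is an isomorphism and then reflecting this through faithful flatness is exactly the right one. The one step that deserves more care is your appeal to ``the structure isomorphism $A \otimes_B \cF \cong A \otimes \Phi(\cF)$ of relative Hopf modules'': tracing your chain of identifications, the composite $A \otimes_B \cF \to A \otimes \Phi(\cF)$ induced by $A \otimes_B \unit_\cF$ is precisely $a \otimes_B f \mapsto a f_{(-1)} \otimes [f_{(0)}]$, so invoking the same map as an already-known isomorphism would be circular. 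The fix is to establish that map as an isomorphism independently, either via the ordinary (one-sided) fundamental theorem of Hopf modules applied to the $A$-Hopf module $A \otimes_B \cF$, or more directly by exhibiting the explicit two-sided inverse $a \otimes [f] \mapsto a\,S(f_{(-1)}) \otimes_B f_{(0)}$ and checking it via the antipode axioms (well-definedness on $\Phi(\cF)$ uses $S(b_{(1)})b_{(2)} = \varepsilon(b)1$ for $b \in B^+$). This is essentially the content of the formula~\eqref{eqn:unitinverse} that the paper records immediately after the theorem statement. With that point made explicit, your argument is complete.
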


As observed in~\cite[Corollary 2.7]{MMF2}, the inverse of the unit~$\unit$ of the equivalence admits a useful explicit description: 
\begin{align} \label{eqn:unitinverse} 
  \unit^{-1}\!\left(\sum_i f_i \otimes [g_i] \right) = \sum_i f_iS\big((g_i)_{(-1)}\big)(g_i)_{(0)}.
\end{align}

\subsection{The Fundamental Theorem of Two-Sided Hopf Modules}
\label{app:fun}

In this subsection we consider a~special case of Takeuchi's equivalence, namely the fundamental theorem of
two-sided Hopf modules. (This equivalence was originally considered in~\cite[Theorem 5.7]{Schauenburg} using
a~parallel but equivalent formulation, see also~\cite{Saraccoa19}.)
For a~Hopf algebra $A$, the counit $\e:A \to \mathbb{C}$ is a~Hopf algebra map.
The associated quantum homogeneous space is given by~$A = A^{\co(\mathbb{C})}$,
the category~$\qMod{A}{B}{}{B}$ specialises to~$\qMod{A}{A}{}{A}$,
and  the category $\qMod{H}{}{}{B}$ reduces to the category of right $A$-modules $\rMod{}{A}$.
In this special case we find it useful to denote the functor~$\Phi$  as
\begin{align*}
  F: \qMod{A}{A}{}{A}  \to \rMod{}{A}, & & \cF \mapsto \cF/A^+\cF,
\end{align*}
Moreover, since the cotensor product over~$\mathbb{C}$ is just the usual tensor product~$\otimes$,
we see that the functor~$\Psi$ reduces to 
\begin{align*}
  A \otimes -: \rMod{}{A}  \to \qMod{A}{A}{}{A}, & & V \mapsto A \otimes V.
\end{align*}
Since faithful flatness is trivially satisfied in this case, we have the following corollary of Takeuchi's equivalence.

\begin{thm}[Fundamental Theorem of Two-Sided Hopf Modules] \label{thm:FunThm2Side}
  An adjoint equivalence of  categories between~$\qMod{A}{A}{}{A}$ and  $\rMod{}{A}$
  is given by the functors~$F$ and $A \otimes -$, and the unit, and counit, natural isomorphisms
  \begin{align*}
    \unit:& ~ \cF \to A \otimes F(\cF),  & ~~ f \mto f_{(-1)} \oby [f_{(0)}],\\
    \counit:&~ F(A \otimes V) \to V,  ~  & \left[ a \oby v \right] \mto  \e(a)v. ~~ 
  \end{align*} 
\end{thm}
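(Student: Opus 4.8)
The plan is to obtain the statement as the special case of Takeuchi's equivalence (Theorem~\ref{thm:TakEquiv}) in which the quotient Hopf algebra $H$ is taken to be $\bC$ and the surjection $\pi\colon A \to H$ is the counit $\e\colon A \to \bC$, which is indeed a Hopf algebra map. The first step is to identify the associated quantum homogeneous space. For this choice the homogeneous right coaction $\DEL_R\colon A \to A \otimes \bC \cong A$ sends $a \mapsto a_{(1)}\e(a_{(2)}) = a$ by the counit axiom, so every element of $A$ is coinvariant and $B = A^{\co(\bC)} = A$. Consequently $B^+ = B \cap \ker(\e) = A^+$, the category $\qMod{A}{B}{}{B}$ becomes $\qMod{A}{A}{}{A}$, whose defining compatibility~\eqref{eqn:TakCompt} now reads $\DEL_L(afb) = \DEL_L(a)\DEL_L(f)\DEL_L(b)$ for all $a,b \in A$; and, since a left $\bC$-comodule is simply a vector space, the category $\qMod{H}{}{}{B}$ collapses to the category $\rMod{}{A}$ of right $A$-modules.

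Next I would verify that the two hypotheses and the two functors of Takeuchi's equivalence specialise as claimed. Faithful flatness of $A$ as a right $B = A$-module is automatic: as a right module over itself $A$ is free of rank one, so $A \otimes_A -$ is naturally isomorphic to the identity functor on left $A$-modules and trivially preserves and reflects exact sequences. The functor $\Phi$ becomes $F(\cF) = \cF/A^+\cF$ by the identification $B^+ = A^+$ just made. For the functor $\Psi$ one must check that the cotensor product over the trivial coalgebra reduces to the ordinary tensor product: in $A \square_{\bC} V = \ker\big(\DEL_R \otimes \id - \id \otimes \DEL_L\colon A \otimes V \to A \otimes \bC \otimes V\big)$ both the right coaction $\DEL_R$ on $A$ and the left $\bC$-coaction $\DEL_L$ on $V$ are the canonical isomorphisms onto $A \otimes \bC$ and $\bC \otimes V$ respectively, so after the identification $A \otimes \bC \otimes V \cong A \otimes V$ the two maps coincide and their difference vanishes identically; hence the kernel is all of $A \otimes V$ and $\Psi(V) = A \otimes V$.

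Finally I would transcribe the unit and counit of Takeuchi's adjoint equivalence under these identifications. The unit $\unit\colon \cF \to \Psi\circ\Phi(\cF)$ becomes $\unit\colon \cF \to A \otimes F(\cF)$, $f \mapsto f_{(-1)} \otimes [f_{(0)}]$, which is its stated form; and the counit $\counit\colon \Phi\circ\Psi(V) \to V$ becomes the map $F(A \otimes V) \to V$ sending $[\sum_i a^i \otimes v^i] \mapsto \sum_i \e(a^i) v^i$, which on the coset of a single simple tensor reads $[a \otimes v] \mapsto \e(a) v$, as required. Since Theorem~\ref{thm:TakEquiv} guarantees that $\unit$ and $\counit$ are the unit and counit natural isomorphisms of an adjoint equivalence, the claim follows. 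I expect no genuine obstacle here, as the content is entirely contained in Takeuchi's theorem; the only points requiring care are the two elementary reductions above, namely that $B = A$ and that cotensoring over $\bC$ is ordinary tensoring.
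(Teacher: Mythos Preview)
Your proposal is correct and follows exactly the approach taken in the paper: the theorem is presented there as the special case of Takeuchi's equivalence (Theorem~\ref{thm:TakEquiv}) with $H=\bC$ and $\pi=\e$, noting that then $B=A^{\co(\bC)}=A$, the cotensor product over~$\bC$ reduces to the ordinary tensor product, and faithful flatness is trivially satisfied. If anything, you supply more detail on these reductions than the paper itself does.
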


\subsection{Some Monoidal Equivalences}

In this subsection we recall two monoidal equivalences induced by Takeuchi's equivalence.
Denote by~$\Modz{A}{B}$ the full sub-category of~$\qMod{A}{B}{}{B}$ whose objects~$\cF$ satisfy the identity 
$
\mathcal{F}B^+ = B^+\mathcal{F}. 
$ 
Consider also the full sub-category of $\qMod{H}{}{}{B}$ consisting of those objects endowed with the trivial
right $B$-action, which is to say, those objects~$V$ for which $v \tl b = \e(b)v$, for all $v \in V$,
and $b \in B$.
This category is clearly isomorphic to~$\lMod{H}{}$, the category of left $H$-comodules, and as such,
Takeuchi's equivalence induces an equivalence between~$\Modz{A}{B}$ and~$\lMod{H}{}$,  for details see~\cite[Lemma 2.8]{MMF2}.

For $\mathcal{F},\mathcal{D}$ two objects in~$\Modz{A}{B}$, we denote by~$\mathcal{F} \oby_B \mathcal{D}$
the usual bimodule tensor product endowed with the standard left $A$-comodule structure.
This gives $\qMod{A}{B}{}{B}$  the structure of a~monoidal category.
If $\mathcal{F},\mathcal{D}$ are contained in the subcategory~$\Modz{A}{B}$,
then it is easily checked that $\mathcal{F} \oby_B \mathcal{D}$ is again an~object in~$\Modz{A}{B}$.
Thus $\Modz{A}{B}$ is a~monoidal subcategory of~$\qMod{A}{B}{}{B}$.
With respect to the usual tensor product of comodules in~$\lMod{H}{}$,
Takeuchi's equivalence is given the structure of a~monoidal equivalence
(see~\cite[\textsection4]{MMF2} for details) by the morphisms
\begin{align*}
  \mu_{\mathcal{F}, \mathcal{D}}: \Phi(\mathcal{F}) \oby \Phi(\mathcal{D}) \to \Phi(\mathcal{F} \oby_B \mathcal{D}), & & [f] \oby [d] \mto [f \oby d], &  \text{ ~~~ for any  } \mathcal{F},\mathcal{D} \in \Modz{A}{B}.
\end{align*}
This monoidal equivalence will be tacitly assumed throughout the paper, along with the implied monoid
structure on~$\Phi(\N)$, for any monoid object $\N \in \Modz{A}{B}$. 

Consider now the category $\lMod{A}{B}$, whose objects are left $A$-comodules, and left $B$-modules,
satisfying the obvious analogue of~\eqref{eqn:TakCompt}, and whose morphisms are left $A$-comodule,
right $B$-module maps. We can endow any object $\cF \in \lMod{A}{B}$ with a~right $B$-action uniquely defined  by
\[
  f \triangleleft b := f_{(-2)}bS(f_{(-1)})f_{(0)}.
\]
Since $e_{(-2)}bS(e_{(-1)})e_{(0)} \in B^+\cF$, for all $b \in B^+$, this new right module structure satisfies
the defining conditions of~$\Modz{A}{B}$, giving us an~obvious equivalence between~$\lMod{A}{B}$ and~$\Modz{A}{B}$.
In particular, we see that any left $A$-comodule, left $B$-module map between two
objects in~$\Modz{A}{B}$ is automatically a~morphism.
(We should note that the implied equivalence
between~$\lMod{A}{B}$ and~$\lMod{H}{}$ is the original form of Takeuchi's equivalence~\cite{Tak}, the
bimodule form presented above being an easy consequence.)

Next we examine~$\modz{A}{B}$ the full sub-category of $\qMod{A}{B}{}{B}$ whose objects~$\cF$ are finitely
generated as left $B$-modules and $\qmod{H}{}{}{B}$, the full sub-category of~$\qMod{H}{}{}{B}$ whose
objects are finite-dimensional as complex vector spaces. As established in~\cite[Corollary~2.5]{MMF3}
Takeuchi's equivalence induces an~equivalence between these two sub-categories.
We define the {\em dimension} of an~object $\mathcal{F} \in ^A_B \rmod{}{B}$ to be the dimension
of~$\Phi(\mathcal{F})$ as a~vector space.

\section{Quantum Integers} \label{app:quantumintegers}

Quantum integers are ubiquitous in the study of quantum groups.
For this paper in particular, they arise in the defining relations of the Drinfeld--Jimbo quantum groups,
and in the calculation of the curvature of the positive line bundles over quantum projective
space~$\mathcal{O}_q(\mathbb{CP}^n)$.
In each case we use different but related formulations for quantum integers.
Thus we take care here to clarify our choice of conventions.
We begin with the version of quantum integer used in the definition of the Drinfeld--Jimbo quantum groups.
For $q \in \bC$, the {\em quantum integer} $[m]_q$  is  the complex number 
\[
  [m]_q := q^{-m+1} + q^{-m+3} + \cdots + q^{m-3} + q^{m-1}.
\]
Note that when $q \notin \{-1,0,1\}$, we have the identity
\[
  [m]_q  = \frac{q^m-q^{-m}}{q-q^{-1}}.
\]
We next recall the definition of the quantum binomials, which arise in the quantum Serre relations of the
Drinfeld--Jimbo quantum groups. For any $n \in \mathbb{N}$, we denote
\begin{align*}
  [n]_q! = [n]_q[n-1]_q \cdots [2]_q[1]_q,
\end{align*}
and moreover, we denote $[0]_q! = 1$.
For any non-zero $q \in \mathbb{C}$, and any $n,r \in \mathbb{N}_0$, the associated $q$-binomial coefficient is the complex number
\begin{align*}
  \begin{bmatrix} n \\ r \end{bmatrix}_q := \frac{[n]_q!}{[r]_q! \, [n-r]_q!}.
\end{align*}

By contrast, the form of quantum integer arising in curvature calculations is defined as follows:
For $q \in \bC\setminus\{1\}$, the {\em quantum integer} $(m)_q$  is the complex number 
\begin{align*}
  (m)_q  = \frac{1-q^m}{1-q}.
\end{align*}
When $m > 0$, we have
\begin{align*}
  (m)_q := 1 + q + q^2 + \cdots + q^{m-1}.
\end{align*}

The definition of quantum binomial also makes sense for this version of quantum integer, although we will not
use it in this paper. Finally, it is instructive to note that the two conventions are related by the identity
\begin{align*}
  [m]_q = q^{1-m} (m)_{q^2}.
\end{align*}

\newcommand{\etalchar}[1]{$^{#1}$}
\providecommand{\bysame}{\leavevmode\hbox to3em{\hrulefill}\thinspace}
\providecommand{\MR}{\relax\ifhmode\unskip\space\fi MR }
\providecommand{\MRhref}[2]{%
  \href{http://www.ams.org/mathscinet-getitem?mr=#1}{#2}
}
\providecommand{\href}[2]{#2}


\end{document}